\documentclass[11pt,fleqn]{article}

\usepackage{amsmath,amssymb,amsthm,enumerate,cite}

\setlength{\textwidth}{160.0mm} \setlength{\textheight}{240.0mm}
\setlength{\oddsidemargin}{0mm} \setlength{\evensidemargin}{0mm}
\setlength{\topmargin}{-19mm} \setlength{\parindent}{5.0mm}

\tolerance=3333

\newcommand{\p}{\partial}

\newcommand{\const}{\mathop{\rm const}\nolimits}

\newtheorem{theorem}{Theorem}

{\theoremstyle{definition}
\newtheorem{definition}{Definition}
\newtheorem{note}{Note}
\newtheorem*{note*}{Note}

}

\begin{document}

\par\noindent
{\LARGE\bf On nonclassical symmetries of generalized Huxley equations\par}

{\vspace{4mm}\par\noindent {\bf N.M. Ivanova$^{\dag,\ddag}$ and C. Sophocleous$^\ddag$
} \par\vspace{2mm}\par}

{\vspace{2mm}\par\noindent {\it
$^{\dag}$~Institute of Mathematics of NAS of Ukraine, 
3 Tereshchenkivska Str., 01601 Kyiv, Ukraine\\
}}
{\noindent \vspace{2mm}{\it $\phantom{^{\dag,\ddag}}$~e-mail: ivanova@imath.kiev.ua }\par}

{\vspace{2mm}\par\noindent {\it
$^{\ddag}$~Department of Mathematics and Statistics, University of Cyprus,
CY 1678 Nicosia, Cyprus\\
}}
{\noindent {\it
$\phantom{^\ddag}$~e-mail: christod@ucy.ac.cy
} \par}

{\vspace{5mm}\par\noindent\hspace*{8mm}\parbox{140mm}{\small
Nonclassical symmetries of  a class of generalized Huxley equations of form $u_t=u_{xx}+k(x)u^2(1-u)$ are found.
More precisely, for the class under consideration we completely classify reduction operators with $\tau=1$ and give
a wide number of examples of equations admitting reduction operators with $\tau=0$.
}\par\vspace{5mm}}

\section{Introduction}

Consider reaction-diffusion equation of form
\begin{equation}\label{eqReacDifEqCubicSpace}
u_t=u_{xx}+k(x)u^2(1-u),
\end{equation}
where $k(x)\ne0$. 
This equation models many phenomena that occur in different areas of mathematical physics and biology.
In particular, it can be used to describe the spread of a recessive advantageous allele through a population in which
there are only two possible alleles at the locus in question.
Equation~\eqref{eqReacDifEqCubicSpace} is interesting also in the area of nerve axon potentials~\cite{Scott1975}.
Case $k=\const$ is the famous Huxley equation.
For more details about application
see~\cite{Bradshaw-Hajek2004,Bradshaw-Hajek&Edwards&Broadbridge&Williams2007}
and references therein.

Bluman and Cole \cite{Bluman&Cole1969} introduced a new method for finding group-invariant
(called also similarity) solutions of
partial differential equations. The method was called by the authors ``non-classical''
to emphasize the difference between it and the ``classical'' Lie reduction method described, e.g., in~\cite{Olver1986,Ovsiannikov1982}.
A precise and rigorous definition of nonclassical invariance
was firstly formulated in~\cite{Fushchych&Tsyfra1987} as ``a generalization of the
Lie definition of invariance'' (see also~\cite{Zhdanov&Tsyfra&Popovych1999}).
Later operators satisfying the nonclassical invariance criterion were also called, by different authors, nonclassical
symmetries, conditional symmetries, $Q$-conditional symmetries and reduction operators~\cite{FushchichSerov1988,
Fushchych&Shtelen&Serov&Popovych1992,Levi&Winternitz1989}.
The necessary definitions, including ones of equivalence of reduction operators,
and relevant statements on this subject are collected in~\cite{Popovych&Vaneeva&Ivanova2005,VPS_2009}.

Bradshaw-Hajek
{\it at al}~\cite{Bradshaw-Hajek2004,Bradshaw-Hajek&Edwards&Broadbridge&Williams2007} started
studying class~\eqref{eqReacDifEqCubicSpace} from the symmetry point of view.
More precisely, they found some cases of equations~\eqref{eqReacDifEqCubicSpace} admitting Lie and/or nonclassical symmetries.
Complete classification of Lie symmetries of class~\eqref{eqReacDifEqCubicSpace}
is performed in~\cite{Ivanova2008}.
Conditional symmetries of Huxley and Burgers--Haxley equations having nontrivial intersection with class~\eqref{eqReacDifEqCubicSpace}
are investigated in~\cite{Hydon2000,Estevez1994,Estevez&Gordoa1995,ArrigoHillBroadbridge1993,Clarkson&Mansfield1993}.
The present paper is a step towards to the complete classification of nonclassical symmetries of class~\eqref{eqReacDifEqCubicSpace}.

Reduction operators
of equations~\eqref{eqReacDifEqCubicSpace} have the general form
$
Q = \tau\p_t + \xi\p_x + \eta\p_u,
$
where~$\tau$, $\xi$ and~$\eta$ are functions of $t$, $x$ and $u$, and $(\tau, \xi) \ne (0,0)$. We consider two cases:

\noindent
1. $\tau \ne 0$. Without loss of generality $\tau=1$.

\smallskip
\noindent
2. $\tau=0,~\xi\ne 0$. Without loss of generality $\xi=1$.

\smallskip
\noindent
We present a complete classification for the case 1, while for the case 2 we found several examples.

\section{Equivalence transformations and Lie symmetries}

Since classification of nonclassical symmetries is impossible without detailed knowledge of Lie invariance properties,
we review~\cite{Ivanova2008} the equivalence group and results of the group classification of class~\eqref{eqReacDifEqCubicSpace}.
The complete equivalence group~$G^{\sim}$ of class~\eqref{eqReacDifEqCubicSpace} contains
only scaling and translation transformations of independent variables~$t$ and~$x$.
More precisely it consists of transformations
\[
\tilde t=\varepsilon_1^2t+\varepsilon_2,\quad
\tilde x=\varepsilon_1x+\varepsilon_3,\quad
\tilde u=u,\quad
\tilde k=\varepsilon_1^{-2}k,
\]
where $\varepsilon_i$, $i=1,2,3$ are arbitrary constants, $\varepsilon_1\ne0$.


\begin{theorem}
There exists three $G^{\sim}$-inequivalent cases of equations from class~\eqref{eqReacDifEqCubicSpace}
admitting nontrivial Lie invariance algebras
(the values of $k$ are given together with the corresponding maximal Lie invariance algebras, $c=\const$) (see~\cite{Ivanova2008}):
\begin{gather*}
1:\quad \forall k, \quad \langle \p_t \rangle;\\
2:\quad k=c, \quad \langle \p_t,\, \p_x \rangle;\\
3:\quad k=cx^{-2}, \quad \langle \p_t,\, 2t\p_t+x\p_x \rangle.
\end{gather*}
\end{theorem}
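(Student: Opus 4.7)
The plan is to carry out a standard Lie group classification for the second-order parabolic equation $\Delta \equiv u_t - u_{xx} - k(x)u^2(1-u) = 0$. I would start from the general infinitesimal generator $Q = \tau\p_t + \xi\p_x + \eta\p_u$ with coefficients in $(t,x,u)$ and impose the infinitesimal invariance criterion $Q^{(2)}\Delta\big|_{\Delta=0}=0$. After using $\Delta=0$ to eliminate $u_t$ and splitting the resulting polynomial identity with respect to the remaining jet variables $u_x$, $u_{xx}$, $u_{tx}$, the derivative-dependent part gives the classical ``trivial'' determining equations for a $(1+1)$-dimensional semilinear heat equation: $\tau_x=\tau_u=0$, $\xi_u=0$, $\eta_{uu}=0$, and $2\xi_x=\tau_t$. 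Integrating these yields $\tau=\tau(t)$, $\xi=\tfrac12\tau'(t)x+\sigma(t)$, and $\eta=\alpha(t,x)u+\beta(t,x)$, together with a condition on $\alpha_x$ coming from the coefficient of $u_x$.

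The remaining, non-derivative part of the invariance condition contains the nonlinearity $F(x,u)=k(x)u^2(1-u)$ and therefore constitutes the classifying equation. Because $F$ is a cubic polynomial in $u$ with $x$-dependent coefficients, this single equation can be split with respect to powers of $u$ into four relations (from $u^3,u^2,u^1,u^0$). The highest-order coefficient, that of $u^3$, does not involve $\beta$ and takes the schematic form $\xi k'+(3\alpha+\tau')k=0$, which couples the transport data $(\xi,\tau,\alpha)$ to the arbitrary element $k(x)$. The next-to-highest coefficient ($u^2$) does the analogous job with a $(2\alpha+\tau')k$-term plus $\xi k'$ and a contribution of $\beta$; the $u^1$ and $u^0$ equations determine $\alpha$ and $\beta$ via a linear inhomogeneous heat-type system.

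I would then treat the two $u^3$- and $u^2$-coefficient equations as the \emph{classifying conditions} on $k(x)$: eliminating $\alpha$ between them produces an ODE for $k$ whose structure dictates which specializations of $k$ enlarge the symmetry beyond the obvious time translation $\p_t$ (which corresponds to $\tau=1$, $\xi=\sigma=0$, $\alpha=\beta=0$ and always solves the system). A short case analysis, performed modulo the equivalence group $G^\sim$ described just above the statement, should yield exactly the additional generators listed: the case $\xi_x=0$ with $k'=0$ forces $k=\const$ and produces the extra translation $\p_x$, while the case with $\tau'\ne0$ forces the scaling relation $\xi k'+(\tau'+3\alpha)k=0$ to be compatible with $\xi=\tfrac12\tau'x+\sigma$, which after reducing $\sigma$ and $\alpha$ to zero by the invariance conditions leaves $k=cx^{-2}$ and the projective-type generator $2t\p_t+x\p_x$.

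The main obstacle is the final case analysis: one has to verify, in a clean and exhaustive way, that no other functional form of $k(x)$ satisfies the classifying ODE with the constraints coming from $\tau=\tau(t)$ alone, so that the enumeration of inequivalent cases is truly complete. This step requires careful use of the $G^\sim$-action (scaling and translation in $x$, scaling and translation in $t$) to normalize the free constants in $k$ and $\xi$, and to rule out seemingly new cases that are in fact equivalent to those already listed. Since the theorem is attributed to~\cite{Ivanova2008}, the argument reduces to referring to that classification and checking that each listed $k$ indeed realizes exactly the stated maximal invariance algebra.
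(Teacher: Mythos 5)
The paper offers no proof of this theorem at all: it is presented as a review of the group classification already carried out in~\cite{Ivanova2008}, with the equivalence group $G^\sim$ quoted alongside it. So there is no internal argument to compare against, and your proposal is necessarily a different route --- namely, an actual (sketched) derivation by the standard Lie classification machinery. Your outline is sound and would deliver the result: the determining equations you list ($\tau_x=\tau_u=0$, $\xi_u=0$, $\eta_{uu}=0$, $2\xi_x=\tau_t$) are the correct ones for a semilinear heat equation, and the subsequent splitting of the classifying equation in powers of $u$, followed by a case analysis modulo $G^\sim$, is exactly how such a classification is done. Two points deserve tightening before this counts as a complete proof rather than a plan. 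First, your ``schematic'' coefficients are slightly off: with $\eta=\alpha u+\beta$ the $u^3$-coefficient is $\xi k'+(2\alpha+\tau_t)k=0$ and the $u^2$-coefficient is $\xi k'+(\alpha+\tau_t)k-3\beta k=0$; subtracting them gives $\alpha=-3\beta$, and then the $u^1$- and $u^0$-equations ($\alpha_t-\alpha_{xx}=2\beta k$ and $\beta_t=\beta_{xx}$) force $\beta k=0$, hence $\alpha=\beta=0$ since $k\ne0$. This elimination --- which closely mirrors the $A=-3B$, $B=0$ step the paper performs in Case~(i) of its \emph{nonclassical} classification --- is the crux of completeness, because it rules out any extra symmetry with a nontrivial $u$-component; you gloss over it. Second, once $\alpha=0$ one gets $\xi_t=0$, so $\tau''=0$, $\sigma'=0$, and the classifying ODE $\bigl(\tfrac12\tau'x+\sigma\bigr)k'+\tau'k=0$ admits exactly the three branches you name (generic $k$, $k=\const$, $k=c(x+x_0)^{-2}\sim cx^{-2}$ after translation), so your final case analysis is in fact short and exhaustive --- you could carry it out explicitly instead of retreating, in your last sentence, to the same citation of~\cite{Ivanova2008} that the paper itself uses.
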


In the following section we search for nonclassical symmetries which are not equivalent to the above Lie symmetries.

\section{Nonclassical symmetries}

Now let us recall the definition of nonclassical symmetry (or conditional symmetry, or reduction operator).

Reduction operators (nonclassical symmetries, $Q$-conditional symmetries)
of a differential equation~$\mathcal{L}$ of form $L(t,x,u_{(r)})=0$ have the general form
\[
Q = \tau\p_t + \xi\p_x + \eta\p_u,
\]
where~$\tau$, $\xi$ and~$\eta$ are functions of $t$, $x$ and $u$, and $(\tau, \xi) \ne (0,0)$.
Here $u_{(r)}$ denotes the set of all the derivatives of the function $u$ with respect to $t$ and~$x$
of order not greater than~$r$, including $u$ as the derivative of order zero.

The first-order differential function~$Q[u]:=\eta(t,x,u)-\tau(t,x,u)u_t-\xi(t,x,u)u_x$
is called the {\it characteristic} of the operator~$Q$.
The characteristic PDE $Q[u]=0$ is called also the \emph{invariant surface condition}.
Denote the manifold defined by the set of all the differential
consequences of the characteristic equation~$Q[u]=0$ in the jet space $J^{(r)}$
by ${\cal Q}^{(r)}$.

\begin{definition}\label{DefinitionOfCondSym}
The differential equation~$\mathcal{L}$ of form $L(t,x,u_{(r)})=0$ is called
\emph{conditionally (nonclassicaly) invariant} with respect to the operator $Q$ if
the relation
$Q_{(r)}L(t,x,u_{(r)})\bigl|_{\mathcal{L}\cap\mathcal{Q}^{(r)}}=0$
holds, which is called the \emph{conditional invariance
criterion}. Then $Q$ is called an operator of \emph{conditional
symmetry} (or $Q$-conditional symmetry, nonclassical symmetry, reduction operator etc)
of the equation~$\mathcal{L}$.
\end{definition}

In Definition~\ref{DefinitionOfCondSym} the symbol $Q_{(r)}$ stands for the standard $r$-th prolongation
of the operator~$Q$ \cite{Olver1986,Ovsiannikov1982}.


The classical (Lie) symmetries are, in fact, partial cases of nonclassical symmetries.
Therefore, below we solve the problem on finding only pure nonclassical symmetries which are not equivalent to classical ones.
Moreover, our approach is based on application of the notion of equivalence of nonclassical symmetries
with respect to a transformation group (see, e.g.,~\cite{Popovych&Vaneeva&Ivanova2005}).
For more details, necessary definitions and properties of nonclassical symmetries we refer the reader
to~\cite{Zhdanov&Tsyfra&Popovych1999,Popovych&Vaneeva&Ivanova2005,Popovych1998,VPS_2009}.

Since~\eqref{eqReacDifEqCubicSpace} is an evolution equation, there exist two principally different cases of finding $Q$: 1.~$\tau\ne0$ and 2. $\tau=0$.

First we consider the case with $\tau\ne0$.
Here without loss of generality we can assume that $\tau=1$. The results are summarized in the following theorem.


\begin{theorem}
All possible cases of equations~\eqref{eqReacDifEqCubicSpace} admitting nonclassical symmetries with $\tau=1$ are exhausted by the following ones:
\begin{enumerate}
\item $k=c\tan^2 x$:\quad $Q=\p_t-\cot x\p_x$,
\item $k=c\tanh^2 x$:\quad $Q=\p_t-\coth x\p_x$,
\item $k=c\coth^2 x$:\quad $Q=\p_t-\tanh x\p_x$,
\item $k=cx^{2}$:\quad $Q=\p_t-\frac1x\p_x$,
\item $k=\frac{c^2}2$ $(c>0)$:\quad $Q=\p_t\pm\frac c2(3u-1)\p_x-\frac{3c^2}{4}u^2(u-1)\p_u$
\item $k=2x^{-2}$:\quad $Q=\p_t+\frac3x(u-1)\p_x-\frac3{x^2}u(u-1)^2\p_u$,
\end{enumerate}
where $c$ is an arbitrary constant.
\end{theorem}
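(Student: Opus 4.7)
The plan is to apply the standard algorithm for computing nonclassical symmetries of an evolution equation in the case $\tau=1$. The invariant surface condition reads $u_t=\eta-\xi u_x$; differentiating it in $t$ and $x$ and substituting back into the second prolongation of $Q$ applied to $u_t-u_{xx}-k(x)u^2(1-u)$, one eliminates all $t$-derivatives of $u$. The remaining expression is polynomial in $u_x$, and setting each coefficient to zero on $\mathcal{Q}^{(2)}$ gives the determining system for $\xi(t,x,u)$ and $\eta(t,x,u)$.

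The first step is standard. The coefficient of $u_x^2$ yields $\xi_{uu}=0$, so $\xi=A(t,x)u+B(t,x)$. The coefficient of $u_x$ then allows one to integrate $\eta_{uu}$ and conclude that $\eta$ is a polynomial in $u$ of degree at most three whose leading coefficient is determined by $A$. The remaining (free term in $u_x$) equation, together with the specific right-hand side $k(x)u^2(1-u)$, can be split with respect to powers of $u$ because $k$, $A$, $B$ and the $u$-coefficients of $\eta$ depend only on $(t,x)$. This splitting produces an overdetermined system linking $A$, $B$, $k$ and the coefficients of $\eta$.

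The classification then splits naturally into two branches. In the branch $A\equiv 0$ (i.e.\ $\xi$ independent of $u$) the $t$-dependence of $B$ can be normalized away using translations/scalings from $G^{\sim}$ and the determining system collapses to a coupled pair of ODEs in $x$ for $B(x)$ and $k(x)$; solving this pair under the assumption that the operator is not Lie yields the four cases $k\in\{c\tan^2 x,\,c\tanh^2 x,\,c\coth^2 x,\,cx^2\}$ with $\xi$ equal to $-\cot x$, $-\coth x$, $-\tanh x$, $-1/x$ respectively. In the branch $A\not\equiv 0$ the cubic dependence of $\eta$ on $u$ must balance the cubic term $-k u^3$ of the equation; forcing compatibility fixes $A$ and $B$ to be time-independent and reduces the remaining equations to an overdetermined ODE system in $x$ whose only (non-Lie) solutions produce Cases~5 and~6, with their explicit $\xi$ and $\eta$. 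Along the way I would use $G^{\sim}$ (the scalings/translations of $t,x$ listed in Section~2) to absorb integration constants and to present only $G^{\sim}$-inequivalent forms of $k$, and I would finally check that the six operators obtained are not equivalent, modulo the characteristic equation $Q[u]=0$, to any Lie symmetry from Theorem~1.

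The main obstacle will be the exhaustive case analysis in the branch $A\not\equiv 0$. There the determining system is genuinely nonlinear, the coefficients of $\eta$ are coupled to $A$ through $k(x)$, and it is easy to miss solution branches or to introduce spurious ones. Controlling this compatibility carefully — in particular, showing that exactly the two forms $k=\tfrac12 c^2$ and $k=2x^{-2}$ survive and producing the precise $\eta=-\tfrac{3c^2}{4}u^2(u-1)$ and $\eta=-\tfrac{3}{x^2}u(u-1)^2$ — is the technical core of the theorem. The first branch is more routine but still requires care to rule out additional $k$'s arising from degenerate sub-cases of the ODE for $B(x)$.
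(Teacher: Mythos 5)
Your overall strategy coincides with the paper's: derive the determining system, use $\xi_{uu}=0$ to write $\xi$ as linear in $u$, integrate for $\eta$ (a cubic in $u$ whose leading coefficient is fixed by the $u$-coefficient of $\xi$), split by powers of $u$, and branch on whether $\xi$ depends on $u$. In the paper this branching comes from the determining equation $\tfrac23\phi^3-3k\phi=0$ (with $\xi=\phi u+\psi$), which forces either $\phi=0$ or $k=\tfrac29\phi^2$, $\phi_t=0$. However, your treatment of the branch $A\equiv0$ contains a step that is wrong as stated. You claim the $t$-dependence of $B$ (the paper's $\psi$) ``can be normalized away using translations/scalings from $G^{\sim}$''. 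This is false: $G^{\sim}$ consists only of $\tilde t=\varepsilon_1^2t+\varepsilon_2$, $\tilde x=\varepsilon_1x+\varepsilon_3$, $\tilde k=\varepsilon_1^{-2}k$, and such transformations cannot remove genuine $t$-dependence. In fact, after this branch is reduced (the paper first deduces $\eta\equiv0$ via $A=-3B$ and then $B=0$, deductions you never make) to the pair $2\psi\psi_x-\psi_{xx}+\psi_t=0$, $k_x\psi+2k\psi_x=0$, there \emph{is} a genuinely $t$-dependent solution family $\psi=(ax+b)/(2at+m)$, $k=c(ax+b)^{-2}$. Completeness of the classification requires exhibiting this family and recognizing it as equivalent to the Lie symmetry $2t\p_t+x\p_x$ of case 3 of Theorem 1; it cannot be discarded by an equivalence-group normalization that does not exist.

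Beyond this, your proposal defers precisely the computations that constitute the proof. In both branches you assert what ``would'' come out — the ODE pair for $B(x)$, $k(x)$, and the compatibility analysis producing cases 5 and 6 — without deriving it. The paper closes the branch $\phi\ne0$ by showing the system forces $\psi_t=A_t=B_t=0$, expressing $\psi=(6\phi_x-\phi^2)/(3\phi)$ and $A=(4\phi\phi_x-3\phi_{xx})/(6\phi)$ (and $B$ similarly) in terms of $\phi$ alone, and deriving the first-order consequence $3\phi_x^2+\phi^2\phi_x=0$, whose solutions $\phi=3/(x+c)$ and $\phi=c$ yield cases 6 and 5; it closes the branch $\phi=0$ by solving $\psi'=\psi^2+a$, $k=c/\psi^2$, which yields cases 1--4. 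Since you yourself flag this case analysis as ``the main obstacle,'' what you have is a correct plan whose execution is missing: the exhaustiveness claim of the theorem rests exactly on the steps left undone, and the one concrete shortcut you do propose (the $G^{\sim}$ normalization) would render the argument invalid if used.
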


\begin{proof}
We search for reduction operator (operator of nonclassical ($Q$-conditional) symmetry) in form
$
Q=\p_t+\xi(t,x,u)\p_x+\eta(t,x,u)\p_u.
$
Then the system of determining equations for the coefficients of operator~$Q$ has the form
\begin{gather*}
\xi_{uu}=0,\\
2\xi\xi_u-2\xi_{xu}+\eta_{uu}=0,\\
2\xi\xi_x-2\eta\xi_u-3k\xi_uu^3+3k\xi_uu^2+2\eta_{xu}-\xi_{xx}+\xi_t=0,\\
-k\eta_uu^2(1-u)+2k\xi_xu^2(1-u)+\eta_{xx}-2\xi_x\eta\\=\eta_t-k_x\xi u^2(1-u)-2k\eta u+3k\eta u^2.
\end{gather*}
From the first equation we obtain immediately that
\[
\xi=\phi(t,x)u+\psi(t,x).
\]
Substituting it to the second equation we derive
\[
\eta=-\frac13\phi^2u^3-\phi\psi u^2+\phi_xu^2+A(t,x)u+B(t,x).
\]
Then, splitting the rest of determining equations with respect to different powers of $u$ implies the following
system of equations for coefficients $\phi$, $\psi$, $A$ and $B$.
\begin{gather}\nonumber
\frac23\phi^3-3k\phi=0,\quad
-4\phi\phi_x+2\phi^2\psi+3k\phi=0,\\ \nonumber
-2\phi_x\psi+\phi_t-2\phi\psi_x-2\phi A+3\phi_{xx}=0,\\ \nonumber
2\psi\psi_x-2\phi B+2A_x-\psi_{xx}+\psi_t=0,\\ \nonumber
-\frac13\phi^2\phi_x+\frac13k\phi^2+k\phi\psi=3k\phi_x+k_x\phi,\\ \nonumber
\frac23\phi^2\psi_x-\frac23\phi\phi_{xx}-\frac83\phi_x^2-2k\psi_x-2kA+2\phi\phi_x\psi\\ \nonumber
=-\frac23\phi\phi_t-2k\phi_x-k_x\phi+k_x\psi,\\ \nonumber
-2\phi_xA+\phi_{xxx}+2\phi\psi\psi_x+kA-\phi_{xx}\psi-4\phi_x\psi_x-\phi\psi_{xx}+2k\psi_x\\ \nonumber
=\phi_{tx}-\phi_t\psi-\phi\psi_t-k_x\psi+3kB,\\ 
A_{xx}-2\psi_xA=A_t+2\phi_xB-2kB,\nonumber \\
-2\psi_xB+B_{xx}=B_t. \label{sysDetEqsCondSymReduced}
\end{gather}
Now from the first equation of~\eqref{sysDetEqsCondSymReduced} it is obvious that either (i) $\phi=0$ or (ii) $\phi_t=0$, $k=\frac29\phi^2$.
Consider separately these two possibilities.

\bigskip\noindent{\it Case (i)}. $\phi(t,x)=0$.
System~\eqref{sysDetEqsCondSymReduced} is read now like
\begin{gather*}
2A_x-\psi_{xx}+2\psi\psi_x+\psi_t=0,\\
-2k\psi_x-2kA-\psi k_x=0,\\
2k\psi_x+kA+k_x\psi-3kB=0,\\
A_{xx}-2A\psi_x+2kB-A_t=0,\\
B_{xx}-2\psi_xB-B_t=0.
\end{gather*}
From the second and third equations we deduce that $A=-3B$. After substituting this to the previous system we obtain
\begin{gather*}
2\psi\psi_x-6B_x-\psi_{xx}+\psi_t=0,\\
k_x\psi+2k\psi_x-6kB=0,\\
6\psi_xB-3B_{xx}+3B_t+2kB=0,\\
B_{xx}-2B\psi_x-B_t=0.
\end{gather*}
It follows from the last two equations that $B=0$. Then the rest of the determining equations is read like
\begin{gather*}
2\psi\psi_x-\psi_{xx}+\psi_t=0,\quad
k_x\psi+2k\psi_x=0.
\end{gather*}
General solution of this system is
\begin{gather*}
k=c,\quad \psi=\const,\\
k=\frac c{(ax+b)^2},\quad \psi=\frac{ax+b}{2ta+m},\quad\mbox{and}\\
k=\frac c{\psi^2},\quad \psi'=\psi^2+a.
\end{gather*}
Nonclassical symmetry operator obtained from the first two branches of the solution of the above system are equivalent to the usual Lie symmetry.
The third branch (up to equivalence transformations of scaling and translations of~$x$) gives cases 1--4 of the theorem.

\bigskip\noindent{\it Case (ii)}. $\phi_t=0$, $k=\frac29\phi^2$.
Substituting this to system~\eqref{sysDetEqsCondSymReduced} we obtain easily that $\psi_t=A_t=B_t=0$.
Then, the rest of the system~\eqref{sysDetEqsCondSymReduced} has the form
\begin{gather*}
-4\phi_x+2\phi\psi+\frac23\phi^2=0,\\
-2\phi_x\psi-2\psi_x\phi-2\phi A+3\phi_{xx}=0,\\
2\psi\psi_x-2\phi B+2A_x-\psi_{xx}=0,\\
\frac{2}9\phi^2\psi_x-\frac23\phi\phi_{xx}-\frac83\phi_{x}^2-\frac49\phi^2A+\frac{14}9\phi\psi\phi_x
=-\frac89\phi^2\phi_x,\\
\phi_{xxx}+\frac49\phi^2\psi_x-\frac23\phi^2B+\frac29\phi^2A-\phi\psi_{xx}-\phi_{xx}\psi-4\phi_x\psi_x
-2\phi_xA+2\phi\psi\psi_x\\=-\frac49\phi\phi_x\psi-\phi\psi_t,\\
A_{xx}-2\psi_xA=-\frac49\phi^2B+2\phi_xB+A_t,\\
B_{xx}=2\psi_xB+B_t.
\end{gather*}
It follows then that
\[
B=\frac{9(2\psi_xA-A_{xx})}{2(2\phi^2-9\phi_x)} ,\quad \psi=\frac{6\phi_x-\phi^2}{3\phi},\quad
A=\frac{4\phi\phi_x-3\phi_{xx}}{6\phi}.
\]
Substituting this values to the above system we obtain a system of 4 differential equations for
one function $\phi$ only that has  first order differential consequence of form
$
3\phi_x^2+\phi^2\phi_x=0.
$
It is not difficult to show that general solution of this constraint $\phi=\frac3{x+c}$, $\phi=c$
satisfies the whole system for $\phi$.
These two values of $\phi$ (taken up to equivalence transformations) give respectively cases 6 and 5 of the theorem.
\end{proof}

\begin{note}
Cases 1, 2 and 4 with $c>0$ were known in~\cite{Bradshaw-Hajek2004,Bradshaw-Hajek&Edwards&Broadbridge&Williams2007},
constant coefficient case 5 with $c=2$ can be found in, e.g.,~\cite{Hydon2000},
while 1,2 and~4 with $c<0$, 3 and 6 are new.
\end{note}

Now we turn into the case 2. That is, we consider nonclassical symmetry operator with $\tau=0$.
Without loss of generality we assume it to be of the form
\[
Q=\p_x+\eta(t,x,u)\p_u.
\]

Any operator nonclassical symmetry of the above form satisfies the following equation
\begin{equation}\label{eqDetEqCondSymTau=0}
-\eta_{xx}-2\eta\eta_{xu}-\eta^2\eta_{uu}+k\eta_uu^2-k\eta_uu^3+\eta_t+k_xu^3-k_xu^2-2k\eta u+3k\eta u^2=0.
\end{equation}
The corresponding invariant surface condition is
$
u_x=\eta .
$
Eliminating $u_x$ and $u_{xx}$, equation \eqref{eqReacDifEqCubicSpace} reads
\begin{equation}\label{newequation}
u_t=\eta\eta_u+\eta_x+k(x)u^2(1-u).
\end{equation}
Using a solution of \eqref{eqDetEqCondSymTau=0}, we can find $u$ by integrating invariance surface condition and then substituting in \eqref{newequation}
to derive a solution of \eqref{eqReacDifEqCubicSpace}.

Now as it was shown in~\cite{Popovych1998,Zhdanov&Lahno1998}  for more general case of $(1+n)$-dimensional evolution equations,
integration of equation \eqref{eqDetEqCondSymTau=0}
is, in some sense, equivalent to integration of the initial
equation~\eqref{eqReacDifEqCubicSpace}.
However, since it contains bigger number of unknown variables it is possible to construct certain partial solutions.
Thus, for example, we have succeeded to find  all ($G^{\sim}$-inequivalent) partial solutions of equation~\eqref{eqDetEqCondSymTau=0} of the form
\[
\eta(x,t,u)=\sum_{p=-m}^n \phi_p(x,t)u^p,
\]
where $m$ and $n$ are positive integers and $\phi_p(x,t)$ unknown functions. We find the following results:

\begin{enumerate}
\item $k=2B^2$, $\eta=B(x)u^2-\tan x u$, where
\[
-4BB'+4B'\tan x-B''+2B+2B^2\tan x=0.
\]
For example, this equation has a solution of form $B=\tan x$.

\item $k=2B^2$, $\eta=B(x)u^2+\tanh x u$, where
\[
4BB'+4B'\tanh x+B''+2B+2B^2\tanh x=0.
\]
In particular, this equation has a solution of form $B=-\tanh x$.

\item $k=2B^2$, $\eta=B(x)u^2+\coth x u$, where
\[
4BB'+4B'\coth x+B''+2B+2B^2\coth x=0.
\]
In particular, this equation has a solution of form $B=-\coth x$.

\item $k=2B^2$, $\eta=B(x)u^2+\frac u x$, where
\[
4xBB'+4B'+xB''+2B^2=0.
\]
A solution of this equation is $B=-\frac1x$.

\item $k=\frac2{x^2}$, $\eta=\frac1x(u^2-1)$.

\item $k=\frac1{2x^2}$, $\eta=\frac1{2x}u^2$.

\item $k=2\tan^2 2x$, $\eta=-u^2\tan 2x$.

\item $k=2\tanh^2 2x$, $\eta=u^2\tanh 2x$.

\end{enumerate}

More detailed investigation of conditional symmetries and construction of associate similarity
solutions of equations from class~\eqref{eqReacDifEqCubicSpace}
will be the subject of a forthcoming paper.

\subsection*{Acknowledgements}
This research was supported by Cyprus Research Promotion Foundation
(project number $\Pi$PO$\Sigma$E$\Lambda$KY$\Sigma$H/$\Pi$PONE/0308/01).

\end{document}